\begin{document}

\newtheorem{theorem}{Theorem}[section]
\newtheorem{lemma}[theorem]{Lemma}
\newtheorem{corollary}[theorem]{Corollary}
\theoremstyle{definition}
\newtheorem{definition}[theorem]{Definition}
\theoremstyle{remark}
\newtheorem{remark}[theorem]{Remark}
\newcommand{\Z}{\mathbb{Z}}
\newcommand{\D}{\mathbb{D}}
\newcommand{\R}{\mathbb{R}}
\newcommand{\N}{\mathbb{N}}
\newcommand{\C}{\mathbb{C}}
\newcommand{\card}{\operatorname{card}}
\def\capacity{\operatorname{cap}}
\def\diam{\operatorname{diam}}
\renewcommand\Re{\operatorname{Re}}
\renewcommand\Im{\operatorname{Im}}
\numberwithin{equation}{section}
\title[Julia set and fast escaping
set]{The Julia set and the fast escaping set of a quasiregular mapping}
\dedicatory{Dedicated to the memory of Professor Frederick W.\ Gehring}
\subjclass[2000]{Primary 37F10; Secondary 30C65, 30D05}

\author{Walter Bergweiler}
\address{Mathematisches Seminar,
Christian--Albrechts--Universit\"at zu Kiel,
Lude\-wig--Meyn--Str.~4,
D--24098 Kiel,
Germany}
\email{bergweiler@math.uni-kiel.de}
\author{Alastair Fletcher}
\address{Department of Mathematical Sciences, Northern Illinois University,
DeKalb, IL 60115-2888, USA}
\email{fletcher@math.niu.edu}
\author{Daniel A. Nicks}
\address{School of Mathematical Sciences, University of Nottingham, Nottingham NG7 2RD, UK}
\email{Dan.Nicks@nottingham.ac.uk}
\thanks{This work was carried out during a visit of the second and third authors to Christian--Albrechts--Universit\"at zu Kiel, which they would like to thank for its hospitality.} 
\begin{abstract}
It is shown that for quasiregular maps of positive lower order
the Julia set coincides with the boundary of the fast escaping
set.
\end{abstract}
\maketitle

\section{Introduction}
Quasiregular maps are a natural generalisation of holomorphic functions
to higher dimensions, see section~\ref{quasi} for the definition and basic
properties.
A quasiregular self-map $f$ of the $d$-sphere $S^d$ is called
\emph{uniformly quasiregular} if there is a uniform bound on the
dilatation of the iterates $f^n$ of $f$.
As in the case of rational functions, the \emph{Julia set} of such
a map is defined as the set of all points in $S^d$ where
the iterates of $f$ fail to be normal. Many results of the
Fatou-Julia iteration theory of rational functions continue to hold in
this more general setting; see~\cite[Section 4]{Bergw2010}
and~\cite[Chapter~21]{Iwaniec01} for surveys.
In principle the corresponding iteration theory of transcendental entire
functions could also be extended to uniformly quasiregular self-maps of $\R^d$
with an essential singularity at~$\infty$,
but so far no examples of such maps are known for $d\geq 3$.
And for $d=2$ uniformly quasiregular
maps are conjugate to holomorphic maps; see \cite[Section 4.1]{Bergw2010}
for a discussion of this result.

A Fatou-Julia theory for quasiregular self-maps of $S^d$ which are not
uniformly quasiregular was developed in \cite{Bergw2013}, extending
work of Sun and Yang~\cite{Sun99,Sun00,Sun01} dealing with the case $d=2$.
The theory was carried over to the case of
quasiregular self-maps of $\R^d$ with an essential
singularity at $\infty$ in~\cite{BN}. Such maps are said to be of
\emph{transcendental type}. In~\cite{Bergw2013,BN} the Julia set
 $J(f)$ of $f$ is defined as the set of all $x$ such that
the complement of $\bigcup^\infty_{k=1}f^k(U)$ has capacity zero
for every neighbourhood $U$ of~$x$;
see section~\ref{quasi} for the definition and a discussion of capacity.

The \emph{escaping set}
$I(f)$ of a quasiregular self-map $f$ of $\R^d$ is defined by
\[ I(f) = \{ x \in \R^d \colon f^n(x) \to \infty \},\]
and it plays a major role in the dynamics of entire functions.
The escaping set was first considered by Eremenko~\cite{Eremenko89}
for transcendental entire functions. He showed that it is always non-empty.
The result was extended to quasiregular maps of transcendental type
in~\cite{BFLM}.
Moreover, Eremenko proved
that $J(f)=\partial I(f)$ for every transcendental entire function.
This key result in the dynamics of entire functions fails to be true
in general for quasiregular maps of transcendental type. In this setting
we still have $J(f)\subset \partial I(f)$, but strict inclusion is
possible~\cite[Theorem~1.3, Example~7.3]{BN}.

Besides the escaping set $I(f)$,
the \emph{fast escaping set} $A(f)$ introduced in~\cite{BH}
has become increasingly important in recent years, see~\cite{RS1,RS}.
In order to define it, recall that the \emph{maximum modulus} is given by
\[
M(r,f)=\max_{|x|=r}|f(x)|,
\]
where $|x|$ denotes the Euclidean norm of a point $x\in\R^d$.
If $f$ is a quasiregular map of transcendental type, then
there exists $R_0\geq 0$ such that $M(R,f)>R$ for $R>R_0$.
Denote by $M^n(R,f)$ the iteration of $M(R,f)$
with respect to the first variable; that is,
$M^1(R,f) = M(R,f)$ and $M^n(R,f) = M(M^{n-1}(R,f),f)$ for $n\geq 2$.
For $R>R_0$ we then have $M^n(R,f)\to\infty$ as $n\to\infty$.
For such $R$ we put
\begin{equation}
\label{1b}
A(f) = \{ x \in \R^d \colon \exists L \in \N\;\forall n\in\N\colon |f^{n+L}(x)| \geq M^n(R,f)\}.
\end{equation}
It can be shown that this definition does not depend on $R$ as long as
$M^n(R,f)\to\infty$, see \cite{RS} for transcendental entire functions
and~\cite{BDF} for quasiregular maps. These papers also contain some
equivalent definitions of $A(f)$.

The methods used in~\cite{BN} to show that $J(f)\subset \partial I(f)$
also yield the following result.
\begin{theorem}
\label{thm1}
Let $f\colon\R^d \to \R^d$ be a quasiregular map of transcendental type.
Then $J(f)\subset \partial A(f)$.
\end{theorem}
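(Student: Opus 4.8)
The plan is to establish the two inclusions $J(f)\subset\overline{A(f)}$ and $J(f)\cap\operatorname{int} A(f)=\emptyset$; together these give $J(f)\subset\overline{A(f)}\setminus\operatorname{int} A(f)=\partial A(f)$.

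For the first inclusion I would follow the same pattern as the proof of $J(f)\subset\overline{I(f)}$ in~\cite{BN}. The key structural fact is that $A(f)$ is \emph{backward invariant}: if $f(y)\in A(f)$, so that $|f^{n+L}(f(y))|\ge M^n(R,f)$ for all $n\in\N$ and some $L$, then $|f^{n+(L+1)}(y)|\ge M^n(R,f)$ for all $n$, and hence $y\in A(f)$; iterating, $f^k(y)\in A(f)$ implies $y\in A(f)$ for every $k\ge1$. One also needs that $A(f)$ is non-empty and, in fact, is not contained in any set of capacity zero; this comes from the known structure of the fast escaping set of a quasiregular map of transcendental type --- for instance, $A(f)$ contains unbounded continua --- see~\cite{BDF}. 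Granting these two facts, let $x\in J(f)$ and let $U$ be any neighbourhood of $x$. By the definition of $J(f)$, the complement of $\bigcup_{k=1}^\infty f^k(U)$ has capacity zero and therefore cannot contain $A(f)$; so there exist $k\ge1$ and $y\in U$ with $f^k(y)\in A(f)$, and backward invariance gives $y\in A(f)\cap U$. As $U$ was arbitrary, $x\in\overline{A(f)}$.

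The second inclusion is immediate from results already available. Since $M^n(R,f)\to\infty$, membership in $A(f)$ forces $f^n(x)\to\infty$, so $A(f)\subset I(f)$ and hence $\operatorname{int} A(f)\subset\operatorname{int} I(f)$. But $J(f)\subset\partial I(f)$ by~\cite[Theorem~1.3]{BN}, and $\partial I(f)$ is disjoint from $\operatorname{int} I(f)$; thus $J(f)\cap\operatorname{int} A(f)=\emptyset$. Combining the two inclusions yields $J(f)\subset\partial A(f)$.

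The step I expect to be the real obstacle is the input for the first inclusion: one must know that $A(f)$ is non-negligible (not of capacity zero), so that the blow-up property built into the definition of $J(f)$ is strong enough to push a preimage of $A(f)$ into every neighbourhood of every Julia point. This leans on the structure theory of the fast escaping set for quasiregular maps of transcendental type; the remaining steps are elementary and parallel the treatment of $I(f)$ in~\cite{BN}.
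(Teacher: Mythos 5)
Your proposal is correct, and its first half is essentially the paper's argument: you use the positive capacity of $A(f)$ (via the unbounded continua in $A(f)$ from \cite{BDF}), the definition of $J(f)$, and the backward invariance of $A(f)$ to pull a point of $A(f)$ into every neighbourhood $U$ of a Julia point, giving $J(f)\subset\overline{A(f)}$. Where you diverge is in showing that Julia points are not interior to $A(f)$. The paper does this directly: it invokes the fact that the bounded-orbit set $BO(f)$ has positive capacity and is completely invariant (\cite{BN}), so the same capacity argument puts a point of $BO(f)\subset\R^d\setminus A(f)$ into $U$ as well, and a connected $U$ meeting both $A(f)$ and its complement must meet $\partial A(f)$. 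You instead quote $J(f)\subset\partial I(f)$ from \cite{BN} as a black box and note that $\operatorname{int}A(f)\subset\operatorname{int}I(f)$ is disjoint from $\partial I(f)$; this is a valid shortcut, and slightly slicker, though it imports the full strength of that theorem where the paper only reuses its key ingredient (the positive capacity of $BO(f)$). The two routes are of essentially equal depth, since the cited theorem is itself proved by the $BO(f)$ argument; the paper's version has the minor advantage of being self-contained modulo the capacity statements, while yours makes the logical dependence on the escaping-set result explicit. Your closing worry is well placed but already resolved in the literature: the non-negligibility of $A(f)$ is exactly what \cite{BDF} supplies.
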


For transcendental entire functions we have not only $J(f)=\partial I(f)$,
but also $J(f)=\partial A(f)$, see~\cite[p.~1125, Remark~1]{RS1}.
While, as mentioned, the first equation does not hold for quasiregular
maps in general, we conjecture that the second equation remains valid
for quasiregular maps of transcendental type.
We can show that this is the case for maps which do not grow too slowly.
\begin{theorem}
\label{thm2}
Let $f\colon\R^d \to \R^d$ be a quasiregular map
satisfying
\begin{equation}\label{1a}
\liminf_{r\to\infty}\frac{\log\log M(r,f)}{\log \log r}=\infty.
\end{equation}
Then $J(f)=\partial A(f)$.
\end{theorem}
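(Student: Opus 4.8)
The plan is to deduce one inclusion from Theorem~\ref{thm1} and to prove the other directly. Note first that \eqref{1a} forces $M(r,f)$ to grow faster than any power of $r$, so $\infty$ is an essential singularity and $f$ is of transcendental type; hence Theorem~\ref{thm1} applies and yields $J(f)\subset\partial A(f)$. For the reverse inclusion write $QF(f)=\R^d\setminus J(f)$ for the quasi-Fatou set. It suffices to prove that every component $U$ of $QF(f)$ which meets $A(f)$ is in fact contained in $A(f)$, for then $A(f)\cap QF(f)$ is relatively open and closed in $QF(f)$, whence $\partial A(f)\subset J(f)$. Fixing $R>R_0$ and putting $A_R(f)=\{x:|f^n(x)|\ge M^n(R,f)\text{ for all }n\in\N\}$, we have $A(f)=\bigcup_{L\ge 0}f^{-L}(A_R(f))$; combining this with the invariance properties of $J(f)$ from \cite{BN} (so that $f^L(U)$ lies in a single component of $QF(f)$), the problem reduces to the case where $U$ meets $A_R(f)$.

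So suppose $x\in U\cap A_R(f)$ and let $y\in U$; we must show $y\in A(f)$. Join $x$ to $y$ by a compact connected set $K\subset U$. The key step is a distortion estimate: there is a constant $c\ge 1$, depending on $K$ but not on $n$, such that
\[
\log\max_{K}|f^n|\ \le\ c\,\log\min_{K}|f^n|
\]
for all sufficiently large $n$. In the holomorphic case this is precisely Harnack's inequality for the positive harmonic function $\log|f^n|$ on a neighbourhood of $K$, which is legitimate once $f^n$ omits $0$ there; and $\min_K|f^n|\to\infty$ because $U$ lies in the escaping set and $f^n\to\infty$ locally uniformly on $U$, a fact we take from \cite{BN}. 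In the quasiregular setting $\log|f^n|$ is $\mathcal A$-harmonic for an operator whose ellipticity is controlled only by the dilatation $K^n$ of $f^n$, so the constant is allowed to grow with $n$; what we need is merely that it can be taken to grow no faster than a fixed finite iterate of the exponential applied to $K^n$.

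Granting the estimate, we finish as follows. For large $n$,
\[
|f^n(y)|\ \ge\ \min_K|f^n|\ \ge\ \bigl(\max_K|f^n|\bigr)^{1/c}\ \ge\ |f^n(x)|^{1/c}\ \ge\ M^n(R,f)^{1/c}.
\]
Now \eqref{1a} gives $M(r,f)\ge r^{c}$ for all large $r$, and iterating this shows $M^n(R,f)^{1/c}\ge M^{n-1}(R,f)\ge M^{n-L}(R,f)$ for all $n\ge L$, for a suitable $L\in\N$. The crucial point is that under \eqref{1a} the sequence $M^n(R,f)$ grows at least iterated-exponentially, with an arbitrarily large base, so that the loss of the exponent $1/c$ — and indeed a loss by any exponent growing at most iterated-exponentially in $K^n$ — costs only a bounded shift of the iteration index. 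Hence $|f^{n+L}(y)|\ge M^n(R,f)$ for all $n\in\N$, so $y\in A(f)$, completing the reduction and the proof.

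I expect the main obstacle to be the distortion estimate, and in particular the bookkeeping on how badly the Harnack constant for $\log|f^n|$ may degrade: since the dilatation of $f^n$ is only bounded by $K^n$, a direct application of Harnack's inequality on a fixed compact set produces a constant worsening with $n$, and one must verify this worsening is no faster than iterated-exponential in $K^n$ — for instance by applying the estimate one iterate at a time to $\log|f|$ on the successive images $f^{n-1}(K),f^{n-2}(K),\dots$, each of which lies in a component of $QF(f)$, while controlling the relevant geometry there. A secondary point requiring care is the input, drawn from \cite{BN}, that a component of $QF(f)$ meeting the escaping set lies in it with $f^n\to\infty$ locally uniformly, since this is what makes $\log|f^n|$ a genuine positive $\mathcal A$-harmonic function near $K$.
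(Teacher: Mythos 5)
Your strategy has the right overall shape (Theorem~\ref{thm1} for one inclusion; a Harnack-type distortion estimate whose constant degrades with $n$, absorbed by \eqref{1a}, for the other), but as written it contains two genuine gaps. First, the distortion estimate $\log\max_K|f^n|\le c_n\log\min_K|f^n|$ is never established -- you flag it yourself as ``the main obstacle'' -- and the route you propose for it rests on an input that is not available and is in fact false in general: a component $U$ of $\R^d\setminus J(f)$ meeting $I(f)$ need not lie in $I(f)$, and $f^n$ need not tend to $\infty$ locally uniformly there. Since $J(f)$ is defined here by the capacity blow-up condition rather than by normality, there is no normal family structure off $J(f)$; indeed \cite[Example~7.3]{BN}, cited in the introduction, gives a transcendental-type quasiregular map with $J(f)\subsetneq\partial I(f)$, i.e.\ a component of the complement of $J(f)$ containing both escaping and non-escaping points. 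So $\log|f^n|$ need not become a positive $\mathcal{A}$-harmonic function near $K$, and showing that it does under \eqref{1a} is essentially the theorem you are trying to prove. Second, your stated tolerance for the degradation of the Harnack constant is too generous. Hypothesis \eqref{1a} yields only $\log\log M^n(R,f)\ge c\, m^n$ for each fixed $m$ (and nothing stronger in general), so losing the exponent $1/c_n$ costs a bounded shift of the iteration index only when $\log c_n=O(B^n)$ for some constant $B$; a constant of size $\exp(\exp(K^n))$, which your phrase ``a fixed finite iterate of the exponential applied to $K^n$'' permits, would defeat the argument.

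The paper closes exactly these gaps by replacing Harnack's inequality with a modulus-of-curve-families argument that requires neither normality nor positivity of $\log|f^n|$. If a ball disjoint from $J(f)$ contained both $x_1\notin A(f)$ and $x_2\in A(f)$, then the level sets $\{|f^k|\le R_1\}$ and $\{|f^k|\ge R_2\}$ each contain a continuum joining $x_j$ to the boundary of a slightly larger ball (maximum principle plus complete invariance of $J(f)$), so Lemma~\ref{lemma4} bounds the modulus of the separating path family below by an absolute constant; on the other hand the $K_O$-inequality together with the counting-function estimates of Lemmas~\ref{lemma3a} and~\ref{lemma3} (which use only that the iterates omit a set of positive capacity on the ball, a consequence of the definition of $J(f)$) bounds that modulus above by $C\,K_O(f^k)K_I(f^k)\bigl(\log(\log R_2/\log R_1)\bigr)^{1-d}$. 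This delivers precisely your distortion inequality in the quantitative form $\log R_2\le \exp(c_3K^k)\log R_1$ with $K=(K_O(f)K_I(f))^{1/(d-1)}$, which is exactly at the edge of what \eqref{1a} can absorb, and the growth comparison then produces the contradiction. You would need to supply an argument of comparable strength for your distortion step before the proposal can be considered a proof.
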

Recall that the \emph{lower order} of a quasiregular self-map of $\R^d$ is given
by~\cite[Section V.8]{Rickman}
\[
(d-1)\liminf_{r\to\infty}\frac{\log\log M(r,f)}{\log r}.
\]
Thus Theorem~\ref{thm2} applies in particular to functions of  positive lower order.
For example, the result applies to quasiregular analogues of the
exponential function and the trigonometric functions whose dynamics
where studied in~\cite{Bergw2010a,BE,FN3}.
On the other hand, quasiregular maps of transcendental type
which have lower order zero were constructed in~\cite{DS}.

One consequence of Theorem~\ref{thm2} is that if
$f\colon\R^d\to\R^d$  is a quasiregular map satisfying~\eqref{1a}
and if $n\in\N$, then $J(f^n)=J(f)$.
This follows since $A(f^n)=A(f)$ for all quasiregular maps
of transcendental type~\cite[Proposition 3.1, (ii)]{BDF}.

A quasiregular map $f\colon\R^d\to\R^d$ which is not of transcendental
type is said to be of \emph{polynomial type}. It was shown in~\cite{FN}
that if $f$ is such a map satisfying $\deg f>K_I(f)$, then $I(f)\neq\emptyset$
and $\partial I(f)$ is perfect. Moreover, $I(f)=A(f)$ for such
maps~\cite{FN4}.
The inclusion $J(f)\subset\partial I(f)$ also holds for such maps,
and again it may be strict; cf.~\cite{Bergw2013,Nicks2013}.

\section{Quasiregular maps} \label{quasi}
We introduce quasiregular maps only briefly and refer to~\cite{Rickman}
for more details.
Let $\Omega\subset\R^d$ be a domain and $1\leq p<\infty$.
The \emph{Sobolev space}  $W^1_{p,\text{loc}}(\Omega)$ consists of
all functions  $f=(f_1,\dots,f_d)\colon \Omega\to \R^d$ for which
all first order weak partial derivatives
$\partial_k f_j$ exist and are locally in $L^p$.
A~continuous map $f\in W^1_{d,\text{loc}}(\Omega)$
is called \emph{quasi\-regular} if there exists a constant $K_O\geq 1$
such that \begin{equation}\label{2a}
|Df(x)|^d\leq K_O J_f(x)
\quad \mbox{ a.e.}
\end{equation}
Here $Df(x)$ is the derivative of $f$ at $x$,
\[
|Df(x)|=\sup_{|h|=1} |Df(x)(h)|
\]
its norm, and $J_f(x)$ the Jacobian determinant.
Put
\[
\ell(Df(x))=\inf_{|h|=1}|Df(x)(h)|.
\]
It turns out that, for $f\in W^1_{d,\text{loc}}(\Omega)$,
the condition that~\eqref{2a} holds
for some $K_O\geq 1$ is equivalent to the existence of a constant $K_I\geq 1$
such that
\begin{equation}\label{2b}
J_f(x)\leq K_I\ell(Df(x))^d
\quad \mbox{ a.e.}
\end{equation}
The smallest constants
$K_O$ and $K_I$ such that~\eqref{2a} and~\eqref{2b} hold are
denoted by $K_O(f)$ and $K_I(f)$ and
called the \emph{outer and inner dilatation} of $f$.

For quasiregular maps $f,g\colon\R^d\to\R^d$ we have~\cite[Theorem~II.6.8]{Rickman}
\[
K_O(f\circ g)\leq K_O(f)K_O(g)
\quad\text{and}\quad
K_I(f\circ g)\leq K_I(f)K_I(g)
\]
and thus
\begin{equation}\label{Kfn}
K_O(f^n)\leq K_O(f)^n
\quad\text{and}\quad
K_I(f^n)\leq K_I(f)^n
\end{equation}
for all $n\in\N$.

Let $G\subset\R^d$ be open and $C\subset G$ compact.
The pair $(G,C)$ is called a \emph{condenser} and
the quantity
\[
\capacity (G,C)=\inf_u\int_G\left|\nabla u\right|^d dm,
\]
with the infimum taken over all non-negative functions
$u\in C^\infty_0(G)$ satisfying $u(x)\geq 1$ for $x\in C$,
is called the \emph{capacity} of the condenser.
If $\capacity (G,C)=0$
for some bounded open set $G$ containing~$C$,
then  $\capacity (G',C)=0$
for every bounded open set $G'$ containing~$C$;
see \cite[Lemma III.2.2]{Rickman}.
In this case we say that $C$ is of  \emph{capacity zero}, while
otherwise $C$ has \emph{positive capacity}.
A closed subset $C$ of $\R^d$ is said to have capacity
zero if this is the case for all compact subsets of~$C$.

An important concept used in the study of quasiregular maps is
the modulus of a curve family;
see~\cite[Chapter II]{Rickman} and~\cite[Chapter~2]{Vuorinen1988}
for a detailed discussion.
For a family $\Gamma$ of paths in $\R^d$, a
non-negative Borel function $\rho\colon \R^d\to\R\cup\{\infty\}$
is called \emph{admissible} if $\int_\gamma \rho\; ds\geq 1$ for all
locally rectifiable paths $\gamma\in \Gamma$. Let
$\mathcal{F}(\Gamma)$ be the family
of all admissible Borel functions. Then
$$
M(\Gamma)=\inf_{\rho\in \mathcal{F}(\Gamma)}\int_{\R^d}\rho^d\; dm
$$
is called the \emph{modulus} of $\Gamma$.

For a domain $G\subset \R^d$ and subsets $E,F$ of $\overline{G}$,
let $\Delta(E,F;G)$ be the family of all paths which have
one endpoint in~$E$ and one endpoint in~$F$, and which are in $G$
otherwise.
A  connection between capacity and the modulus of
a path family was first noted by Gehring~\cite{G} and this has since been extended to
the general equation~\cite[Proposition II.10.2]{Rickman}
$$\capacity (G,C)=M(\Delta(C,\partial G;G)).$$

We denote by $B(a,r)$ the open ball  and by $\overline{B}(a,r)$
the closed ball of radius $r$ around a point $a\in\R^d$ and
by $\diam X$ the Euclidean diameter of a subset $X$ of $\R^d$.
We need the following estimate of Vuorinen
(\cite[Lemma~2.18]{Vuorinen1983}, \cite[Lemma~5.42]{Vuorinen1988}).
\begin{lemma}\label{lemma4}
Let $a\in \R^d$, $r>0$ and $E,F$ continua in $\overline{B}(a,r)$.
Then
\[
M(\Delta(E,F;B(a,r))
\geq c_0 \frac{\min\{\diam E,\diam F\}}{r}
\]
for some constant $c_0$ depending only on~$d$.
\end{lemma}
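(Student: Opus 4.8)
The plan is to translate the statement, via the correspondence between the modulus of a connecting family and the capacity of a condenser, into a lower bound for the conformal capacity of a continuum inside a ball, and then to obtain that bound by a coarea argument when $E$ and $F$ are close and by comparison with classical ring domains when they are far apart. First I would normalise: the modulus of a path family is invariant under similarities of $\R^d$, and $\min\{\diam E,\diam F\}/r$ is unchanged by $x\mapsto(x-a)/r$, so it is enough to produce $c_0=c_0(d)>0$ with $M(\Delta(E,F;B(0,1)))\ge c_0\min\{\diam E,\diam F\}$ for all continua $E,F\subset\overline B(0,1)$. If $E\cap F\neq\emptyset$ the modulus is infinite, so assume $E\cap F=\emptyset$ and relabel so that $t:=\diam E\le\diam F$. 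Since a continuum of diameter $\ge t_0$ contains, by the boundary bumping lemma, a subcontinuum of diameter in $[t_0/2,t_0]$, monotonicity of the modulus lets us assume in addition that $t$ is as small as we please. Finally, by \cite[Proposition~II.10.2]{Rickman} and its standard two-plate extension, the claim becomes
\[
\capacity(B(0,1);E,F):=\inf_u\int_{B(0,1)}|\nabla u|^d\,dm\ \ge\ c_0\,t,
\]
the infimum being over non-negative $u\in W^1_d(B(0,1))\cap C(\overline{B(0,1)})$ with $u|_E=0$ and $u|_F=1$; fix such a $u$, which we may take smooth.

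If $\delta:=\operatorname{dist}(E,F)<t/8$, pick $x_0\in E$ with $\operatorname{dist}(x_0,F)=\delta$. Using that $E$ and $F$ are continua of diameter $\ge t$, one checks that both $E$ and $F$ meet $\partial B(x_0,\tau)$ for every $\tau\in(t/8,3t/8)$; hence $u$ attains the values $0$ and $1$ on $\partial B(x_0,\tau)\cap\overline{B(0,1)}$, which (as $\tau$ is small) is a spherical cap comprising a fixed fraction of $\partial B(x_0,\tau)$. Morrey's embedding on such a cap, combined with scaling, gives $\int_{\partial B(x_0,\tau)\cap B(0,1)}|\nabla u|^d\,d\mathcal H^{d-1}\ge C(d)/\tau$, and integrating this over $\tau\in(t/8,3t/8)$ by the coarea formula in spherical coordinates about $x_0$ yields $\int_{B(0,1)}|\nabla u|^d\,dm\ge C(d)\log 3$, much more than $c_0t$.

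If instead $\delta\ge t/8$, no family of concentric spheres meets both $E$ and $F$, and the bound must come from the fact that a continuum of diameter $\ge t$ has conformal capacity bounded below in terms of $t$ alone — connectedness being essential here, since a disconnected set of the same diameter can have arbitrarily small $d$-capacity. I would use symmetrisation: spherical symmetrisation of the two plates about a common axis does not increase the conformal capacity of the condenser, and — since $E$ meets $\partial B(x_0,\tau)$ for all $\tau\in(0,t]$ about a diameter endpoint $x_0\in E$ — a diameter-type lower bound of order $t$ survives the symmetrisation. The symmetrised condenser is of Grötzsch/Teichmüller type, for which the classical estimates (see \cite{Vuorinen1988}) give $\capacity(B(0,1);E,F)\ge c(d)(\log(1/t))^{1-d}$; and $(\log(1/t))^{1-d}\ge c_0(d)\,t$ for small $t$, which completes the proof.

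The main obstacle is this last case. The coarea argument of the close case breaks down because there is no sphere family meeting both continua, so one genuinely has to exploit connectedness of $E$, and no purely isoperimetric bound on the level surfaces of $u$ is available. Making the symmetrisation rigorous — the choice of axis, the behaviour of $\diam$ under spherical symmetrisation, and the fact that one works inside $B(0,1)$ rather than in $\R^d$ — is the real work. An alternative is to invoke the Loewner property of $\R^d$, but that is not elementary either and additionally requires a comparison of $M(\Delta(E,F;B(0,1)))$ with $M(\Delta(E,F;\R^d))$, which is itself delicate when $E$ and $F$ approach $\partial B(0,1)$.
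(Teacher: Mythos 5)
First, note that the paper does not prove this lemma at all: it is quoted from Vuorinen (\cite[Lemma~2.18]{Vuorinen1983}, \cite[Lemma~5.42]{Vuorinen1988}), so there is no internal proof to compare with and your argument has to stand on its own. Your normalisation, the reduction to small $t=\diam E\le\diam F$, and the case $\operatorname{dist}(E,F)<t/8$ are essentially sound: the verification that both continua meet every sphere $\partial B(x_0,\tau)$ with $\tau\in(t/8,3t/8)$, that the caps $\partial B(x_0,\tau)\cap B(0,1)$ occupy a uniformly bounded-below fraction of the sphere, and the Morrey/coarea computation giving a constant lower bound are all correct. One caveat: passing to the condenser formulation requires the inequality $M(\Delta(E,F;B(0,1)))\ge\capacity$, which is the \emph{hard} direction of the Hesse--Ziemer theorem (the easy direction, $\rho=|\nabla u|$ admissible, gives only $M\le\capacity$), and it is delicate precisely when the plates touch $\partial B(0,1)$. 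This is avoidable — the same coarea estimate can be run directly on an admissible $\rho$ via the standard spherical-cap modulus lemma (V\"ais\"al\"a's ``joining spheres'' estimate) — but as written you are leaning on a nontrivial theorem in its least convenient form.

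The genuine gap is the case $\operatorname{dist}(E,F)\ge t/8$, which is the actual content of the lemma (it is the Loewner property of the ball), and which you describe but do not prove. Two specific steps of your sketch fail as stated. First, the symmetrisation centre: to make ``a diameter-type lower bound of order $t$ survive'' you must symmetrise about a point $x_0\in E$, but then $B(0,1)$ is not invariant and the symmetrisation inequality for the constrained condenser is not available; if instead you symmetrise about the centre of the ball, the diameter information can be annihilated — a continuum $E$ lying on a single sphere $\partial B(0,\rho_0)$ with $\mathcal H^{d-1}(E)=0$ (e.g.\ an arc in $\R^3$) symmetrises to a single point, of zero capacity. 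Second, and independently, every symmetrisation or Teichm\"uller-ring comparison bounds $M(\Delta(E,F;\R^d))$ from below, whereas the lemma concerns the \emph{smaller} quantity $M(\Delta(E,F;B(0,1)))$; the inclusion of path families goes the wrong way, so you need a separate argument (e.g.\ reflection in $\partial B(0,1)$, showing the ball is a quasiextremal distance domain with constant depending only on $d$) to transfer the bound, exactly the ``delicate'' comparison you mention only for the Loewner alternative. You have correctly located where the difficulty lies and flagged it honestly, but the proof is not complete without closing this case; the efficient route here is simply the citation the paper uses, or V\"ais\"al\"a's chaining argument with dyadic annuli carried out inside the ball.
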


\section{Averages of counting functions}\label{averages}
For a quasi\-regular map $f\colon \Omega\to \R^d$ and $x\in \Omega$
the \emph{local index} $i(x,f)$
of $f$ at $x$
is defined by
$$i(x,f) =\inf_U \sup_{y\in\R^d}\card \left( f^{-1}(y)\cap U\right),$$
the infimum being taken over all neighborhoods $U\subset \Omega$ of~$x$.
For $y\in\R^d$ and $E\subset \Omega$ compact
we put
\[
n(E,y)=\sum_{x\in f^{-1}(y) \cap E} i(x,f).
\]
Thus $n(E,y)$ is the number of $y$-points of $f$ in~$E$,
counted according to multiplicity.

The cardinality of $f^{-1}(y) \cap E$ is denoted by $N(y,f,E)$.
Clearly (cf.~\cite[Proposition I.4.10]{Rickman}) we have
\begin{equation}\label{2d}
N(y,f,E)\leq n(E,y).
\end{equation}

The average value of $n(E,y)$ over the sphere $\partial B(0,t)$
is denoted by $\nu(E,t)$.
With the normalized $d$-dimensional Hausdorff measure ${\mathcal H}^d$
and $\omega_{d}={\mathcal H}^{d}(S^{d}(1))$ we thus have
\begin{equation}\label{2e}
\nu(E,t)=\frac{1}{\omega_{d-1}t^{d-1}}\int_{\partial B(0,t)} n(E,y)\;d{\mathcal H}^{d-1}(y).
\end{equation}
We will consider the case where $E$ is a ball
and put $\nu(r,t)=\nu\!\left(\overline{B}(0,r),t\right)$.
The following result is from~\cite[Lemma IV.1.1]{Rickman}.
\begin{lemma}\label{lemma3a}
Let $r,s,t>0$, $\theta>1$ and let $f\colon B(0,\theta r)\to\R^d$ be quasiregular.
Then
\[
\nu(r,t)\leq \nu(\theta r,s)+K_I(f)\frac{|\log(t/s)|^{d-1}}{(\log\theta)^{d-1}}.
\]
\end{lemma}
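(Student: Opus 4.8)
The plan is to reconstruct Rickman's argument, which compares the $y$-points of $f$ over the sphere $\partial B(0,t)$ with those over $\partial B(0,s)$ by lifting radial segments of the target. We may assume $s<t$: the case $s=t$ is immediate from $\overline{B}(0,r)\subset\overline{B}(0,\theta r)$, and $s>t$ is treated symmetrically, lifting outward-pointing segments. Fix $y\in\partial B(0,t)$ and let $\sigma_y$ be the radial segment joining $y$ to $(s/t)y\in\partial B(0,s)$; note that its length in the metric $|dw|/|w|$ equals $\log(t/s)$. By the path-lifting theory for quasiregular maps (see~\cite[Chapter~II]{Rickman}), $\sigma_y$ has, starting from the points of $f^{-1}(y)\cap\overline{B}(0,r)$ and counted with local index, a total of $n(\overline{B}(0,r),y)$ essentially separate maximal liftings inside $B(0,\theta r)$. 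Each such lifting is either \emph{complete}, terminating at a point of $f^{-1}((s/t)y)\cap\overline{B}(0,\theta r)$, or \emph{incomplete}, in which case it leaves every compact subset of $B(0,\theta r)$ and hence contains a subpath joining $\partial B(0,r)$ to $\partial B(0,\theta r)$ within the spherical ring $\{r<|z|<\theta r\}$. Comparing, at each terminal point, the total index of the complete liftings with the local index of $f$ there yields
\[
n(\overline{B}(0,r),y)\le n\!\left(\overline{B}(0,\theta r),\tfrac{s}{t}\,y\right)+N(y),
\]
where $N(y)$ denotes the number, counted with index, of incomplete liftings over $y$.

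Integrating this inequality over $\partial B(0,t)$ against $\mathcal H^{d-1}$ and dividing by $\omega_{d-1}t^{d-1}$ turns the left-hand side into $\nu(r,t)$; and, after the substitution $w=(s/t)y$, for which $d\mathcal H^{d-1}(y)=(t/s)^{d-1}\,d\mathcal H^{d-1}(w)$, the first term on the right becomes exactly $\nu(\theta r,s)$. Thus everything reduces to the estimate
\[
\frac{1}{\omega_{d-1}t^{d-1}}\int_{\partial B(0,t)}N(y)\,d\mathcal H^{d-1}(y)\le K_I(f)\left(\frac{\log(t/s)}{\log\theta}\right)^{d-1}.
\]

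To prove it, consider the path family $\Gamma$ consisting of the crossing subpaths of all the incomplete liftings. On one hand $\Gamma$ lies in the ring $\{r<|z|<\theta r\}$, so
\[
M(\Gamma)\le\capacity\!\left(B(0,\theta r),\overline{B}(0,r)\right)=\omega_{d-1}(\log\theta)^{1-d}.
\]
On the other hand each member of $\Gamma$ is a partial lifting of some $\sigma_y$, the $N(y)$ incomplete liftings over a fixed $y$ are essentially separate, and the radial segments $\{\sigma_y:y\in\partial B(0,t)\}$ realise the modulus $\capacity(B(0,t),\overline{B}(0,s))=\omega_{d-1}(\log(t/s))^{1-d}$ of the target ring. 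Feeding this into Poletsky's modulus inequality for quasiregular maps, in the form involving liftings (cf.~\cite[Chapter~II]{Rickman}), where $m$ essentially separate liftings contribute a factor $K_I(f)/m$, bounds the averaged multiplicity $\frac{1}{\omega_{d-1}t^{d-1}}\int N\,d\mathcal H^{d-1}$ by $K_I(f)$ times the quotient $\capacity(B(0,\theta r),\overline{B}(0,r))/\capacity(B(0,t),\overline{B}(0,s))=(\log(t/s)/\log\theta)^{d-1}$, which is the desired bound.

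I expect the real difficulty to lie in this last step: extracting from the modulus inequality a bound on the \emph{integrated} multiplicity $\int N(y)\,d\mathcal H^{d-1}(y)$, rather than a pointwise one, needs Rickman's sharpened form of Poletsky's inequality, in which the number of liftings is allowed to vary with $y$. One must also be careful with the local-index bookkeeping in the lifting step --- distinct maximal liftings may share a terminal point, and an incomplete lifting need not remain inside the ring although it always contains a genuine crossing subpath --- and with the measurability of $N(\cdot)$. All of this is carried out in~\cite[Chapter~II and Lemma~IV.1.1]{Rickman}; the identity $\capacity(B(0,\rho_2),\overline{B}(0,\rho_1))=\omega_{d-1}(\log(\rho_2/\rho_1))^{1-d}$ for spherical rings is classical.
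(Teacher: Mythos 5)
The paper does not prove this lemma; it is quoted verbatim from Rickman [Lemma IV.1.1], so there is no in-paper argument to compare yours with. Judged as a reconstruction of Rickman's proof, your outline has the right skeleton and the bookkeeping is correct: the reduction to $s<t$, the maximal sequence of $n(\overline{B}(0,r),y)$ essentially separate liftings of the radial segment, the complete/incomplete dichotomy, the pointwise inequality $n(\overline{B}(0,r),y)\le n(\overline{B}(0,\theta r),(s/t)y)+N(y)$, the change of variables producing $\nu(\theta r,s)$, and the identification of the two ring capacities whose quotient is $(\log(t/s)/\log\theta)^{d-1}$ all match the source.

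The one genuine gap is exactly the step you flag, and it is worth saying precisely why it is a gap. V\"ais\"al\"a's inequality as stated (Rickman, Theorem II.9.1) requires a \emph{fixed} integer $m$ of essentially separate liftings for \emph{every} path of the target family, and no black-box application of it yields the bound on $\int_{\partial B(0,t)}N(y)\,d\mathcal{H}^{d-1}(y)$. For instance, the layer-cake decomposition $\int N=\sum_{j\ge1}\mathcal{H}^{d-1}(\{N\ge j\})$ combined with II.9.1 applied with $m=j$ only gives $\mathcal{H}^{d-1}(\{N\ge j\})\le C/j$, and the resulting harmonic series diverges; decomposing into the level sets $\{N=j\}$ fails similarly. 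The averaged estimate has to be obtained by re-running the $\rho$-pullback construction from the proof of Poletsky's/V\"ais\"al\"a's inequality with the number of liftings depending on $y$: one pulls back the extremal metric $\rho(x)=(|x|\log\theta)^{-1}$ of the ring $r<|x|<\theta r$ along the escaping liftings, changes variables in the target shell $s<|w|<t$, and applies H\"older's inequality against the counting function, which is where the factor $K_I(f)$ and the exponent $d-1$ arise. Since you explicitly name this as the difficulty and defer it to Rickman --- which is no more than the paper itself does for the entire lemma --- the proposal is an honest and structurally faithful sketch, but the acknowledged missing step is the whole analytic content of the result rather than a routine detail.
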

The next result is from~\cite[Theorem~3.1]{Bergw2013}.
\begin{lemma}\label{lemma3}
Let $E\subset \R^d$ be a set of positive capacity and $\theta>1$.
Then there exists a constant $c_1$ such that if $r>0$ and
$f\colon B(0,\theta r)\to\R^d\backslash E$ is quasiregular, then
$\nu(r,1)\leq c_1 K_I(f)$.
\end{lemma}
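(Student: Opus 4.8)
The plan is to turn the hypothesis ``$f$ omits the positive-capacity set $E$'' into a bound on an average of the counting function, using the modulus-of-curve-family machinery of \cite{Rickman} in essentially the same way as in the proof of Lemma~\ref{lemma3a}. The underlying idea is a quasiregular analogue of the following one-dimensional argument: if $g\colon B(0,\theta r)\to\C\setminus E$ is holomorphic and $E$ has positive capacity, then $\C\setminus E$ carries a hyperbolic metric $\lambda$, the Schwarz--Pick inequality bounds $g^*\lambda$ on $\overline{B}(0,r)$ by a constant multiple of $1/r$, and integrating the pulled-back area element over $\overline{B}(0,r)$ bounds $\int_{\C\setminus E}\lambda(w)^{2}\,n(\overline{B}(0,r),w)\,dm(w)$, and hence $\nu(r,1)$, by a constant depending only on $E$ and $\theta$. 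In the quasiregular, higher-dimensional setting the hyperbolic metric is replaced by a near-extremal admissible function for a suitable curve family and Schwarz--Pick by the inequalities of Poletsky and V\"ais\"al\"a, with the dilatation $K_I(f)$ entering where conformality was used; this is the source of the factor $K_I(f)$ in the conclusion.

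Concretely, I would proceed as follows. Since quasiregular maps are open, $f$ omits the closed set $\overline E$, which still has positive capacity; fix a compact set $C\subset\overline E$ of positive capacity, lying (after a harmless case distinction) at positive distance from $\partial B(0,1)$, and a ball $B(0,R_1)$ containing $C\cup\partial B(0,1)$. Using Lemma~\ref{lemma4}, the series and parallel laws for the modulus, and the positivity of $\capacity(B(0,2R_1),C)$ and of $\capacity(B(0,2),\partial B(0,1))$, one obtains a constant $m_0=m_0(E,d)\in(0,\infty)$ such that the family $\Gamma_0$ of paths in $\overline{B}(0,R_1)$ joining $\partial B(0,1)$ to $C$ satisfies $m_0\le M(\Gamma_0)<\infty$; this uses only that $C$ is compact of positive capacity, not that $E$ contains a continuum. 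Fix $\theta'\in(1,\theta)$. The crucial feature of $\Gamma_0$ is that its paths terminate on $C\subset\overline E$, which $f$ omits: by the path-lifting theorem, every maximal $f$-lift of a path in $\Gamma_0$ must leave every compact subset of $B(0,\theta r)$, hence, if it starts in $\overline{B}(0,\theta' r)$, it reaches $\partial B(0,\theta r)$. Thus the $f$-lifts of $\Gamma_0$ starting in $\overline{B}(0,\theta' r)$ form a family contained in $\Delta(\overline{B}(0,\theta' r),\partial B(0,\theta r);B(0,\theta r))$, whose modulus is the constant $\omega_{d-1}(\log(\theta/\theta'))^{1-d}$, while each $\beta\in\Gamma_0$ emanating from $y\in\partial B(0,1)$ is lifted with multiplicity $n(\overline{B}(0,r),y)$.

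Feeding this into the modulus inequality for quasiregular maps---in the integrated form that allows the $y$-dependent multiplicity, essentially as in \cite[Chapter~II and Lemma~IV.1.1]{Rickman}---together with the change of variables $\int(\rho_0\circ f)^{d}J_f\,dm=\int\rho_0(y)^{d}\,n(\overline{B}(0,\theta' r),y)\,dm(y)$ and the lower bound $M(\Gamma_0)\ge m_0$, yields an estimate of the shape $\int\rho_0(y)^{d}\,n(\overline{B}(0,\theta' r),y)\,dm(y)\le c(d)\,K_I(f)$, where $\rho_0$ is a near-extremal admissible function for $\Gamma_0$. Since $\rho_0$ may be taken bounded below by a positive constant on a neighbourhood of $\partial B(0,1)$, the left-hand side dominates a constant multiple of $\int_{1-\varepsilon}^{1+\varepsilon}\nu(\theta' r,t)\,dt$; and Lemma~\ref{lemma3a} gives $\nu(r,1)\le\nu(\theta' r,t)+K_I(f)|\log t|^{d-1}(\log\theta')^{1-d}$ for every $t$, so averaging over $t\in[1-\varepsilon,1+\varepsilon]$ delivers $\nu(r,1)\le c_1 K_I(f)$ with $c_1=c_1(E,d,\theta)$.

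The main obstacle is the step in the previous paragraph: one wants not merely that $\nu(r,1)$ is bounded for each fixed dilatation (which would follow more softly, e.g.\ from a normal-family argument in the style of Miniowitz), but a bound that is \emph{linear} in $K_I(f)$, and this forces one to run the modulus/Poletsky estimate in its precise integrated form, carefully tracking the pulled-back test function and the negligible set where maximal lifts fail to be complete---exactly the delicate content of Poletsky's inequality and of the proof of Lemma~\ref{lemma3a}. The remaining points (extracting $C$, the lower bound $M(\Gamma_0)\ge m_0$, the degenerate case where $\overline E$ clusters onto the unit sphere, and the final comparison of $\nu(r,1)$ with nearby spherical averages) are routine.
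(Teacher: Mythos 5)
Your proposal is essentially correct, but it is worth pointing out that the paper does not prove this lemma at all: it quotes \cite[Theorem~3.1]{Bergw2013}, which gives the bound for $\nu(r,t)$ under the normalisation $E\subset B(0,t/2)$, and then remarks that the stated version follows via Lemma~\ref{lemma3a}. Your closing step --- bounding $\nu(\theta'r,t)$ for $t$ near a sphere kept away from $C$ and transferring to $\nu(r,1)$ by Lemma~\ref{lemma3a} --- is exactly that remark, and the body of your argument is in effect a reconstruction of the proof of the cited theorem. The reconstruction follows the right lines: since paths of $\Gamma_0$ terminate in the omitted compact set $C$ of positive capacity, none of their maximal lifts is complete, so every lift starting in $\overline{B}(0,\theta'r)$ reaches $\partial B(0,\theta r)$ and the lift family has modulus at most the scale-invariant constant $\omega_{d-1}(\log(\theta/\theta'))^{1-d}$; combining this with $M(\Gamma_0)\geq m_0(E,d)>0$ through the V\"ais\"al\"a--Poletsky inequality produces the linear dependence on $K_I(f)$. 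The one point you assert rather than establish is the crucial one: the textbook form of V\"ais\"al\"a's inequality \cite[Theorem~II.9.1]{Rickman} works with a \emph{uniform} multiplicity $m$ and hence only controls the minimum of $n(\overline{B}(0,\theta'r),y)$ over starting points $y$, whereas you need the integrated version controlling the spherical average $\nu$; upgrading the former to the latter is precisely the nonroutine bookkeeping carried out in the proof of \cite[Lemma~IV.1.1]{Rickman}. Since you identify this correctly and the remaining issues (extracting $C$, positivity and finiteness of $M(\Gamma_0)$, the degenerate position of $\overline{E}$ relative to the unit sphere) are indeed routine, the proposal stands; in a written version you should either cite \cite[Theorem~3.1]{Bergw2013} as the authors do or write out the variable-multiplicity modulus estimate in full.
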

In~\cite[Theorem~3.1]{Bergw2013} this result is stated
with $\nu(r,1)$ replaced by $\nu(r,t)$ where $E\subset B(0,t/2)$, but
the above version follows easily from this using Lemma~\ref{lemma3a}.

\section{Proof of the theorems}
\begin{proof}[Proof of Theorem~\ref{thm1}]
Let $f\colon\R^d\to\R^d$ be a quasiregular map of transcendental type.
As already mentioned, the argument is essentially the same as
in~\cite[Theorem 1.3]{BN} where it was proved that $J(f)\subset \partial I(f)$.

Let
\[BO(f)=\left\{x\in \R^d\colon(f^n(x)) \text{ is bounded}\right\}\]
be the set of points with bounded orbits.
It was shown in~\cite[Theorem~1.4]{BN} that $BO(f)$ has
positive capacity.

Also, it was shown in~\cite[Theorem 1.2]{BDF} that $A(f)$ is non-empty
and that every component of $A(f)$ is unbounded. This
implies
that $A(f)$ contains continua and thus $A(f)$ has positive
capacity; see~\cite[Corollary III.2.5]{Rickman}.

Let now $x_0\in J(f)$ and let $U$ be a neighborhood of $x_0$.
By the
definition of $J(f)$, the complement of $\bigcup_{n=1}^\infty f^n(U)$ has
capacity zero. As both $A(f)$ and $BO(f)$ have positive capacity,
each of these sets intersects $\bigcup_{n=1}^\infty f^n(U)$.
Since  both $A(f)$ and $BO(f)$ are completely invariant,
we deduce that each of the two sets in fact intersects~$U$.
As $BO(f)\subset \R^d\backslash A(f)$ it follows that
$\partial A(f)$ intersects~$U$.
This holds for every neighborhood $U$ of~$x_0$.
Hence $x_0\in\partial A(f)$.
\end{proof}
\begin{proof}[Proof of Theorem~\ref{thm2}]
In view of Theorem~\ref{thm1} it suffices to prove that $\partial A(f)\subset J(f)$.
Suppose that  $x_0\in \partial A(f)$ but $x_0\notin J(f)$.
Since $J(f)$ is closed there exists $r>0$ such that
$B(x_0,4r)\cap J(f)=\emptyset$. Thus
the iterates of $f$ omit a set $C$ of positive capacity in $B(x_0,4r)$.
Since $x_0\in \partial A(f)$ there exist
$x_1\in B(x_0,r)\backslash A(f)$ and $x_2\in B(x_0,r)\cap A(f)$.
Let $R>2$ be such that $B(0,R)\cap J(f)\neq\emptyset$.
For $k\in\N$ we put
$F=f^k$,  $R_1=\max\{|F(x_1)|,R\}$ and $R_2=|F(x_2)|$.
For large $k$ we then have $R_2>R_1$.

For such $k$ we consider the sets
$X_1=\{x\in B(x_0,2r)\colon  |F(x)|\leq R_1\}$ and
$X_2=\{x\in B(x_0,2r)\colon  |F(x)|\geq R_2\}$.
Denote by $Y_j$ the component of $X_j$ that contains $x_j$, for $j\in\{1,2\}$.
By the maximum principle, $Y_2$ connects $x_2$ to $\partial B(x_0,2r)$.
We claim that, analogously, $Y_1$ connects $x_1$ to $\partial B(x_0,2r)$.
Suppose that this is not the case.
Then $Y_1$ is a compact subset of $B(x_0,2r)$.
This implies that $F(Y_1)=\overline{B}(0,R_1)$.
Hence $F(B(x_0,2r))\supset\overline{B}(0,R_1)\supset B(0,R)$.
Since $J(f)$ is completely invariant and  $B(0,R)\cap J(f)\neq\emptyset$,
we deduce that
$B(x_0,2r)\cap J(f)\neq\emptyset$, contradicting the choice of~$r$.

Let $\Gamma=\Delta(X_1,X_2;B(x_0,2r)$.
Using Lemma~\ref{lemma4} we find that
\begin{equation}
\begin{aligned}
\label{3a}
M(\Gamma)
&
\geq
M(\Delta(Y_1,Y_2;B(x_0,2r))
\\ &
\geq
c_0 \frac{\min\{\diam Y_1,\diam Y_2\}}{2r}
\geq \frac{c_0}{2}.
\end{aligned}
\end{equation}
On the other hand,
it follows from the proof of the $K_O$-inequality~\cite[Remark II.2.5]{Rickman}
that
\[
M(\Gamma)\leq K_O(F) \int_{\R^d}\rho(y)^d N\!\left(y,F,\overline{B}(x_0,2r)\right) dy
\]
for every $\rho\in{\mathcal F}(\Gamma)$.
We consider functions $\rho$ of the form $\rho(y)=\sigma(|y|)$ for
some continuous function $\sigma\colon [R_1,R_2]\to [0,\infty)$.
Then $\rho\in{\mathcal F}(\Gamma)$ if
\[
\int_{R_1}^{R_2}\sigma(t) dt\geq 1.
\]
Using~\eqref{2d} and \eqref{2e}
we obtain
\[
M(\Gamma)\leq
K_O(F)
\omega_{d-1}\int_{R_1}^{R_2} \sigma(t)^d \nu\!\left(\overline{B}(x_0,2r),t\right) t^{d-1} dt.
\]
For simplicity and without loss of generality we assume that
$x_0=0$ so that $\nu\!\left(\overline{B}(x_0,2r),t\right)=\nu(2r,t)$. Then the last inequality
takes the form
\begin{equation}\label{3b}
M(\Gamma)\leq
K_O(F)
\omega_{d-1}\int_{R_1}^{R_2} \sigma(t)^d \nu(2r,t) t^{d-1} dt.
\end{equation}
Lemmas~\ref{lemma3a} and \ref{lemma3} yield
\[
\nu(2r,t)
\leq  \nu(3r,1)+K_I(F) \left(\frac{\log t}{\log\frac32} \right)^{d-1}
\leq  K_I(F) \left(c_1+\left(\frac{\log t}{\log\frac32} \right)^{d-1}\right)
\]
and thus
\[
\nu(2r,t)
\leq c_2 K_I(F) \left(\log t\right)^{d-1}.
\]
for some constant $c_2$ and $t\geq R_1\geq 2$. Choosing
\[
\sigma(t)=\frac{1}{t\log t \log\displaystyle\frac{\log R_2}{\log R_1}}.
\]
we deduce from~\eqref{3b} that
\[
M(\Gamma)\leq
c_2\omega_{d-1} K_O(F) K_I(F)
\left(\log\frac{\log R_2}{\log R_1}\right)^{1-d}.
\]
Combining this with~\eqref{3a} yields
\[
\left(\log\frac{\log R_2}{\log R_1}\right)^{d-1}
\leq \frac{2c_2\omega_{d-1}}{c_0}
 K_O(F) K_I(F).
\]
Recall that $F=f^k$
and hence $K_O(F)\leq K_O(f)^k$
and $K_I(F)\leq K_I(f)^k$ by~\eqref{Kfn}.
With $K=(K_O(f)K_I(f))^{1/(d-1)}$ we deduce that
\[
\log\frac{\log R_2}{\log R_1} \leq c_3 K^k,
\]
with $c_3=(2c_2\omega_{d-1}/c_0)^{1/(d-1)}$.
Recall also that $R_1=\max\{|f^k(x_1)|,2\}$ and $R_2=|f^k(x_2)|$.
Since $x_2\in A(f)$ there exists $L\in \N$ such that $R_2\geq M^{k-L}(R,f)$
for $k\geq L$, with $R$ as in the definition of $A(f)$.
On the other hand,
$R_1\leq  M^{k-L-1}(R,f)$ for large~$k$.
Putting $S_k= M^{k-L-1}(R,f)$ we deduce that
\[
\log\frac{\log M(S_k,f)}{\log S_k} \leq c_3 K^k.
\]
By hypothesis, we have $\log M(S_k,f)\geq (\log S_k)^2$ for large~$k$.
This yields
\begin{equation}\label{3c}
\log \log S_k \leq c_3 K^k.
\end{equation}
On the other hand, using again the hypothesis \eqref{1a},
\[
\log \log S_k
=\log \log M(S_{k-1},f)
\geq 2 K \log\log S_{k-1}
\]
for large $k$ and hence
\[
\log \log S_k
\geq c_4 (2 K)^k
\]
for some positive constant $c_4$.
This contradicts~\eqref{3c}.
\end{proof}

\begin{remark}
The proof shows that \eqref{1a} can be replaced by
\[
\liminf_{r\to\infty}\frac{\log\log M(r,f)}{\log \log r}>
(K_I(f)K_O(f))^{1/(d-1)}.
\]
\end{remark}
\begin{remark}
It does not seem possible to improve the argument
by choosing a different function $\sigma$.
In fact, choosing $\rho(x)=c g(|x|)$ with $c=1/\int_{R_1}^{R_2} g(t)dt$
yields the best estimate when
\[
\displaystyle
\frac{ \int_{R_1}^{R_2} g(t)^d \left(t \log t\right)^{d-1} dt }
{\left(\int_{R_1}^{R_2} g(t)dt\right)^d}
\]
is minimal.
But by H\"older's inequality
we have
\begin{align*}
\int_{R_1}^{R_2} g(t)dt
&=
\int_{R_1}^{R_2} g(t) (t\log t)^{(d-1)/d}\frac{1}{(t\log t)^{(d-1)/d}}dt
\\ &
\leq
\left(\int_{R_1}^{R_2} g(t)^d (t\log t)^{d-1} dt\right)^{1/d}
\left(\int_{R_1}^{R_2} \frac{1}{t\log t}dt\right)^{(d-1)/d}
\\ &
=
\left(\int_{R_1}^{R_2} g(t)^d (t\log t)^{d-1} dt\right)^{1/d}
\left(\log\frac{\log R_2}{\log R_1}\right)^{(d-1)/d}.
\end{align*}
This shows that
$g(t)=1/(t\log t)$,
which corresponds to
our choice of~$\sigma$,
minimizes the expression in question.
\end{remark}

\end{document}